\theoremstyle{definition}
\theoremstyle{plain}
\newtheorem{prop}[subsection]{Proposition}
\newtheorem{thm}[subsection]{Theorem}
\newtheorem{lem}[subsection]{Lemma}
\newtheorem{cor}[subsection]{Corollary}
\newcommand{\mbf}{\mathbf}
\newcommand{\msf}{\mathsf}
\newcommand{\mbb}{\mathbb}
\newcommand{\mscr}{\mathscr}
\newcommand{\mcal}{\mathcal}
\newcommand{\mrk}{\mathfrak}
\newcommand{\U}{\mbf U}
\title[An identification] {An identification of Lusztig's modified forms of  quantum algebras and their analogues}
\author{ Zhaobing Fan and Yiqiang Li}
\address{State University of New York  at Buffalo \\Buffalo, NY 14260}
\email{
zhaobing@buffalo.edu (Z.Fan),
yiqiang@buffalo.edu (Y.Li)
}
\date{\today}
\keywords{Two/multi-parameter quantum algebras, multi-parameter quantum superalgebras, modified quantum algebras,  weight categories, canonical bases}
\subjclass[2010]{17B37}
\begin{document}
\begin{abstract}
We introduce a multi-parameter twisted Hopf algebra associated with a root datum and show that its modified form is isomorphic to
 Lusztig's modified quantum algebra under certain restrictions on the parameters.
 By taking various specializations of the parameters,
 we obtain similar results for two/multi-parameter quantum algebras and  Kang-Kashiwara-Oh's multi-parameter quantum superalgebras.
 As a consequence, the categories of weight modules  of these algebras are identical.
\end{abstract}

\maketitle

\section{Introduction}\label{Introduction}

Quantum algebras were  introduced by Drinfeld and Jimbo in the 1980s  involving  an auxiliary parameter $v$.
When $v$ is specialized to $1$, these algebras  become the universal enveloping algebras attached to   Kac-Moody Lie algebras.
Their representation theory has seen remarkable developments, such as the canonical basis theory.

In the  work ~\cite{FL12}, we investigate the effect of the extra ingredient, the  Tate twist, in
Lusztig's geometric construction ~\cite{Lusztig93} of negative halves of quantum algebras and their canonical bases.
Our desire to initiate such an investigation is inspired by the recent advancement in supercategorification by Hill-Wang ~\cite{HW12},  where the negative half of an analogue of quantum algebras with an extra parameter $\pi$ subject to $\pi^2=1$ are categorified.
It turns out that  the Tate twist can be naturally served as the second parameter $t$ for the two-parameter analogues of quantum algebras and from which we develop a theory of canonical basis for the negative halves of the  two-parameter analogues. This theory is essentially parallel to  that in the ordinary quantum algebras due to the observation that  the two types of algebras  differ by a two-cycle deformation depending only  on the second parameter.
To this end, we are motivated to  see what one can deduce from another geometric construction, due to
 Beilinson-Lusztig-MacPherson (\cite{BLM90}),  of the whole quantum algebras attached to $\mathfrak{gl}(n)$ by adding the Tate twist.

The first result we obtain is not surprising in that one can still  give a realization of the $q$-Schur quotient of two-parameter analogues of quantum $\mathfrak{gl}(n)$ by using mixed perverse sheaves on partial flag varieties.
What is non-trivial  from the construction is that  the $q$-Schur quotients of both types of algebras  are  identical in the geometric setting, unlike  the negative-half situation  in ~\cite{FL12}.  The result provides with us  a strong   evidence in support of  the speculation that the modified forms of quantum algebras  defined by Lusztig (\cite{Lusztig92, Lusztig93}) and their two-parameter analogues should be identical since the modified forms are in the projective limit of the $q$-Schur quotients.

As one of the main results, we show that Lusztig's modified forms and their two-parameter analogues are isomorphic   indeed.
Moreover, this isomorphism  can be established over the ring $\mbb Z[v^{\pm 1}, t^{\pm1}]$ of Laurent polynomials in two variables.

Via this isomorphism, many longstanding problems  in the study of two-parameter quantum algebras are solved by using known results from
 ordinary quantum algebras.
For example, the modified forms for two-parameter analogues admit  canonical bases.
Furthermore, the decomposition multiplicities  of irreducible modules in a Verma module for the two-parameter analogues are governed
by the Kazhdan-Lusztig polynomials (\cite{KL79}) if the parameters are taken to be generic.

Our approach can be applied as well to the multi-parameter  and multi-parameter super analogues in  ~\cite{HPR10, KKO13}. We obtain similar results with certain restrictions on the parameters.

Instead of proving the results case by case,  we present a uniform proof. We define   a new multi-parameter twisted Hopf algebra  associated to a root datum  (see Section ~\ref{newU}).
The new algebra can be thought of as  a universal/unified algebra in the sense that
the ordinary quantum algebra and its generalizations mentioned above are
 specializations of this algebra under certain assumptions on the parameters.
 We then show that the modified form of this new algebra is isomorphic to that of ordinary quantum algebras  by twisting the Chevalley generators appropriately with certain restrictions on the parameters.
 The results for the above mentioned generalizations follow simultaneously  from this result by specialization of parameters.
 Notice that the crucial twists on the generators  go back to a geometric shift in  ~\cite{Li10} and the total order on the index set $I$  for the super setting first appears in ~\cite{CFLW}.

We refer to ~\cite{KKO13} for related results on weight categories for the  (multi-parameter) super analogues with generic parameters.
The analogous results for quantum super algebras are also obtained  in ~\cite{FL13} for the rank-one case and ~\cite{CFLW} for the general case together with Clark and Wang. See also ~\cite{CHW1, CHW2, CHW3} for the related theory of canonical bases of quantum superalgebras.

It is likely that one can obtain similar results for small quantum groups, which we hope to return to  in the future.

{\bf Acknowledgment.}
Y. Li is  supported partially by the NSF grant DMS-1160351.


\section{Quantum algebra}

\subsection{}

Let us fix  a Cartan datum $(I,\cdot)$ and an $X$-regular root datum $(Y, X, \langle,\rangle)$ of type $(I,\cdot)$.
We will use freely the following notations in this paper.
\begin{itemize}
\item $\{ \alpha_i|i\in I\}$ and  $\{\check \alpha_i| i\in I\}$ --- the set of simple roots and coroots, repectively.

\item $\{ \check{\omega}_i:  \ \forall i\in I\}$  --- the set of fundamental coweights.

\item $d_i=i\cdot i/2.$

\item $a_{ij} =2\frac{i\cdot j}{i\cdot i} =\langle \check{\alpha}_i,\alpha_j \rangle$, $\forall i, j\in I$.

\item $r=1-a_{ij}$, $\forall i\neq j\in I$.

\item $\lambda_i=\langle \check{\alpha}_i,\lambda\rangle$ and  $\lambda(i)=\langle\check{\omega}_i, \lambda\rangle$, for any $\lambda \in X$.
\end{itemize}

\subsection{}

Let $v$ be a generic parameter.  We set
$$\begin{array}{lll}
  [n]_v=\frac{v^n-v^{-n}}{v-v^{-1}},& [n]_v^!=\prod_{p=1}^n[p]_v, &{\rm and}\  \begin{bmatrix}n\\p\end{bmatrix}_v=\frac{[n]_v^!}{[p]_v^![n-p]_v^!},\quad \forall 0\leq p\leq n.
\end{array}$$
We further set
 \begin{equation}\label{eq23}
   v_i=v^{d_i}, \quad \forall i\in I.
 \end{equation}
When we write a notation like $[n]_{v_i}$, it stands for the polynomial obtained by substituting $v$ by $v_i$.

\subsection{}

The {\it quantum algebra} $\U$ associated to the root datum $(Y, X, \langle,\rangle)$ is a unital  associative $\mathbb{Q}(v)$-algebra  generated by the symbols $E_i, F_i$, and $ K_i^{\pm 1},$ $\forall i\in I$ and subject to the following relations.
\begin{align*}
&K_iK_j=K_jK_i,\ \  K_iK_i^{-1}=1=K_i^{-1}K_i. \vspace{6pt}\tag{$\U a$}\\
&K_iE_jK^{-1}_i=v_i^{a_{ij}}E_j,\ \ K_iF_jK^{-1}_i=v_i^{-a_{ij}}F_j. \vspace{6pt}\tag{$\U b$}\\
&E_iF_j-F_j E_i=\delta_{ij}\frac{K_i-K^{-1}_i}{v_i-v^{-1}_i}. \vspace{6pt} \tag{$\U c$}\\
&\textstyle \sum_{l=0}^{r} (-1)^lE_i^{(r-l)}E_j E_i^{(l)}=0,\quad 
\textstyle \sum_{l=0}^{r}  (-1)^lF_i^{(l)}F_j F_i^{(r-l)}=0,\quad {\rm if}\ i\not =j,\vspace{6pt}\tag{$\U d$}
\end{align*}
where
 $E^{(l)}_i=\frac{E_i^l}{[l]^!_{v_i}}$ and
 $F^{(l)}_i=\frac{F_i^l}{[l]^!_{v_i}}$.

  A $\U$-module $M$ is called a {\it weight module} if  $M=\oplus_{\lambda \in X}M_{\lambda}$, where
\begin{equation}\label{eq4}
M_{\lambda}=\{m\in M\ |\ K_i\cdot m=
v_i^{\lambda_i}m,\quad  \forall i\in I\}.
\end{equation}
Let $\mathbf{C}$ be the category of weight modules of $\U$.

\subsection{}

The {\it modified quantum algebra} $\dot{\U}$ of Lusztig associated to the root datum $(Y, X, \langle,\rangle)$
is defined to be  an associative  $\mbb{Q}(v)$-algebra  without unit,
 generated by the symbols $1'_{\lambda}, E'_{\lambda,\lambda-\alpha_i}$ and $F'_{\lambda,\lambda+\alpha_i}$, $\forall \lambda\in X$
 and subject to the following relations.
\begin{align*}
&1'_{\lambda} 1'_{\lambda'} =\delta_{\lambda, \lambda'} 1'_{\lambda}.  \vspace{6pt}\tag{$\dot{\U}a$} \\
&   E'_{\lambda+\alpha_i, \lambda }  1'_{\lambda'} =  \delta_{\lambda, \lambda'} E'_{\lambda+\alpha_i, \lambda}, \quad
1'_{\lambda'}   E'_{\lambda, \lambda-\alpha_i} = \delta_{\lambda', \lambda} E'_{\lambda, \lambda-\alpha_i}, \vspace{6pt}\tag{$\dot{\U}b$}\\
& F'_{\lambda-\alpha_i, \lambda}  1'_{\lambda'}=  \delta_{\lambda, \lambda'} F'_{\lambda-\alpha_i, \lambda}, \quad
1'_{\lambda'}  F'_{\lambda, \lambda+\alpha_i}=  \delta_{\lambda', \lambda} F'_{\lambda, \lambda+\alpha_i}. \vspace{6pt}\\
& E'_{\lambda,\lambda-\alpha_i} F'_{\lambda-\alpha_i,\lambda-\alpha_i+\alpha_j}-F'_{\lambda,\lambda+\alpha_j}  E'_{\lambda+\alpha_j,\lambda+\alpha_j-\alpha_i}=\delta_{ij}
[\lambda_i]_{v_i}1'_\lambda. \vspace{6pt}\tag{$\dot{\U}c$}\\
 & \textstyle \sum_{l=0}^{r}(-1)^lE'^{(r-l)}_{\lambda+r\alpha_i+\alpha_j, \lambda+l\alpha_i+\alpha_j}E'_{\lambda+l\alpha_i+\alpha_j, \lambda+l\alpha_i}
  E'^{(l)}_{\lambda+l\alpha_i,\lambda}=0,\vspace{6pt}\tag{$\dot{\U}d$}\\
  &\textstyle \sum_{l=0}^{r}(-1)^lF'^{(l)}_{\lambda, \lambda+l\alpha_i}F'_{\lambda+l\alpha_i, \lambda+l\alpha_i+\alpha_j}
  F'^{(r-l)}_{\lambda+l\alpha_i+\alpha_j,\lambda +r\alpha_i+\alpha_j}=0,\quad \forall i\neq j,
\end{align*}
where
   $E'^{(l)}_{\lambda+l\alpha_i,\lambda}=\frac{1}{[l]^!_{v_i}}
E'_{\lambda+l\alpha_i,\lambda+(l-1)\alpha_i}\cdots
E'_{\lambda+\alpha_i,\lambda}$
 and $F'^{(l)}_{\lambda,\lambda+l\alpha_i}=\frac{1}{[l]^!_{v_i}}
F'_{\lambda,\lambda+\alpha_i}\cdots F'_{\lambda+(l-1)\alpha_i,\lambda+l\alpha_i}
$.
Set $$\mbf{A}=\mbb{Z}[v,v^{-1}].$$
Let ${}_{\mbf A}\!\dot{\U}$ be the $\mbf{A}$-subalgebra of $\dot{\U}$ generated by
 $E'^{(l)}_{\lambda+l\alpha_i,\lambda}$, $F'^{(l)}_{\lambda,\lambda+l\alpha_i}$ and $1'_{\lambda}$
 for all $\lambda\in X$, $i\in I$ and $l\in \mbb N$.
Let $\dot{\mbf{C}}$  be the category of unital $\dot{\U}$-modules.
By ~\cite[23.1.4]{Lusztig93},
 we have an isomorphism of categories
 $$\mathbf{C}\simeq \dot{\mbf{C}}.$$

\section{The twisted Hopf algebra $\mscr{U}_{\mbf{q,s,t}}$}

\label{newU}
\subsection{}

Let $q_{i}, s_{ij}, t_{ij}\ \forall i, j\in I $ be indeterminates over $\mbb{Q}$.
For short, we denote $\mbf{q}=(q_i)_{i\in I}, \mathbf{s}=(s_{ij})_{i,j\in I},$ and $\mathbf{t}=(t_{ij})_{i,j\in I}$.
Let $\mbb{K}=\mbb{Q}(s_{ij}, t_{ij}, q_{i}^{1/d_i})_{i,j\in I}$ be the field of rational functions in the indicated variables,  where  the parameter $q_i^{1/d}$ is a formal parameter such that its $d$-th power is equal to $q_i$. 
 The algebra  $ \mscr{U}\equiv \mscr{U}_{\mbf{q, s, t}}$ associated to the root datum $(Y, X, \langle, \rangle)$ is an associative $\mbb{K}$-algebra with 1 generated by the symbols $E_i, F_i, K_i^{\pm 1}$ and $ K_i'^{\pm 1},$ $\forall i\in I$, and subject to the following  relations.
\begin{align*}
  & K_iK_j=K_jK_i,\ \ K'_iK'_j=K'_jK'_i,\ \  K_iK'_j=K'_jK_i,\vspace{6pt}\tag{$\mscr{U} a$}\\
     & K_iK_i^{-1}=K_i^{-1}K_i=1,\ \ K'_iK'^{-1}_i=K'^{-1}_iK'_i=1. \vspace{6pt}\\
  & K_iE_jK^{-1}_i=s^{-1}_{ij}t^{-1}_{ij}q_i^{a_{ij}}E_j,\ \ K'_iE_jK'^{-1}_i=s^{-1}_{ij}t^{-1}_{ij}q_i^{-a_{ij}}E_j, \vspace{6pt}\tag{$\mscr{U} b$}\\
     & K_iF_jK^{-1}_i=s_{ij}t_{ij}q_i^{-a_{ij}}F_j,\ \ K'_iF_jK'^{-1}_i=s_{ij}t_{ij}q_i^{a_{ij}}F_j. \vspace{6pt}\\
  & E_iF_j-s_{ij}t_{ji}F_j E_i=\delta_{ij}\frac{K_i-K'_i}{q_i-q^{-1}_i}. \vspace{6pt}\tag{$\mscr{U} c$}\\
  &\textstyle  \sum_{l=0}^{r}(-1)^ls_{ji}^ls_{ij}^{-l}
  E_i^{\langle r-l\rangle}E_j E_i^{\langle l\rangle}=0,\quad 
   \textstyle \sum_{l=0}^{r}(-1)^lt_{ji}^lt_{ij}^{-l}
  F_i^{\langle l\rangle }F_j F_i^{\langle r-l\rangle }=0,\quad {\rm if}\ i\not =j\tag{$\mscr{U} d$},
\end{align*}
where $E^{\langle l\rangle }_i=\frac{E_i^l}{[l]^!_{q_i}}$
and $F^{\langle l\rangle}_i=\frac{F_i^l}{[l]^!_{q_i}}$.
We associate to $\mscr U$ a $\mbb Z[I]$-grading by setting
$$deg(E_i)=i,\quad deg(F_i)=-i\quad {\rm and }\quad deg(K_i)=0=deg(K_i'),\quad \forall i\in I,$$
For any homogenous element $x\in \mscr U$, let $|x|$ be its degree.
For any $\mu, \nu\in \mbb Z[I]$,
let
$$\textstyle t_{\mu, \nu}=\prod_{i, j\in I}t_{ij}^{\mu_i \nu_j} \quad {\rm and} \quad
s_{\mu, \nu}=\prod_{i, j\in I}s_{ij}^{\mu_i \nu_j}.
$$
We twist the algebra structures on $\mscr U\otimes \mscr U$ and $\mscr U\otimes \mscr U\otimes \mscr U$ by
\begin{equation}\label{eq44}
\begin{split}
&  (x_1\otimes x_2)(y_1\otimes y_2)=t_{|x_2|, |y_1|}s_{|y_1|, |x_2|}x_1y_1 \otimes x_2y_2,\quad \mbox{and}\\
&  (x_1\otimes x_2\otimes x_3)(y_1\otimes y_2\otimes y_3)=t_{|x_2|+|x_3|, |y_1|}s_{|y_1|, |x_2|+|x_3|}
t_{|x_3|, |y_2|}s_{|y_2|, |x_3|}
x_1y_1 \otimes x_2y_2 \otimes x_3y_3
\end{split}
\end{equation}
for any homogeneous elements $x_1, x_2, x_3, y_1, y_2$, and $y_3 \in \mscr U.$
\begin{lem}
 If $q_i^{a_{ij}}=q_{j}^{a_{ji}}, \ \forall i, j\in I$,
 then  $\mscr U$ admits a bialgebra structure when equipped with the following comultiplication $\Delta$ and  counit $\varepsilon$.
  \begin{equation}\label{eq45}
  \begin{split}
   &\Delta(E_i)=E_i\otimes 1+K_i\otimes E_i,\quad \Delta(K_i^{\pm 1})=K_i^{\pm 1} \otimes K_i^{\pm 1}, \\
   &\Delta(F_i)=1 \otimes F_i+F_i\otimes K'_i, \quad  \Delta(K'^{\pm 1}_i)=K'^{\pm 1}_i \otimes K'^{\pm 1}_i,\vspace{4pt}\\
   &\varepsilon(K_i^{\pm 1})=\varepsilon(K'^{\pm 1}_i)=1,\quad \hspace{20pt}\varepsilon(E_i)=\varepsilon(F_i)=0,\quad \forall i\in I.\\
  \end{split}
  \end{equation}
\end{lem}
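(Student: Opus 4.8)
The plan is to exploit that $\mscr U$ is presented by generators and relations and that the twisted multiplications in \eqref{eq44} are associative algebra structures on $\mscr U\otimes\mscr U$ and $\mscr U\otimes\mscr U\otimes\mscr U$ — this holds because $s_{\mu,\nu}$ and $t_{\mu,\nu}$ are bicharacters on $\mbb Z[I]$, so the twisting scalars define $2$-cocycles. Granting this, it suffices to define $\Delta$ and $\varepsilon$ on the generators, extend them multiplicatively to the free algebra, and check that they annihilate each defining relation $(\mscr U a)$--$(\mscr U d)$; coassociativity and the counit axioms may then be verified on generators alone, since all maps in sight are algebra homomorphisms. Throughout, the relations involving $F_i$ are treated in complete parallel to those involving $E_i$, using the symmetry $E\leftrightarrow F$, $K\leftrightarrow K'$, $s\leftrightarrow t$ built into \eqref{eq44} and \eqref{eq45}.

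The relations $(\mscr U a)$ are immediate: since $\deg K_i=\deg K_i'=0$, every twisting factor arising in products of the grouplike elements is $t_{0,\ast}s_{\ast,0}=1$, so $\Delta$ restricts to the ordinary untwisted coproduct on the $K_i,K_i'$, which are manifestly grouplike and commuting. For $(\mscr U b)$ one computes $\Delta(K_i)\Delta(E_j)\Delta(K_i^{-1})$ and $\Delta(K_i)\Delta(F_j)\Delta(K_i^{-1})$ directly; because $K_i$ has degree $0$ the twisting factors are again trivial, and applying $(\mscr U b)$ in each tensor slot reproduces the required eigenvalue. The counit $\varepsilon$ sends every relation to a trivial identity in $\mbb K$ (for instance the right-hand side of $(\mscr U c)$ maps to $\delta_{ij}\frac{1-1}{q_i-q_i^{-1}}=0$, matching $\varepsilon(E_i)\varepsilon(F_j)-s_{ij}t_{ji}\varepsilon(F_j)\varepsilon(E_i)=0$), so $\varepsilon$ is an algebra homomorphism.

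The heart of the matter is $(\mscr U c)$, and it is exactly here that the hypothesis $q_i^{a_{ij}}=q_j^{a_{ji}}$ enters. Expanding $\Delta(E_i)\Delta(F_j)-s_{ij}t_{ji}\Delta(F_j)\Delta(E_i)$ in the twisted product and collecting terms, the summands supported on $\mscr U\otimes 1$ and $1\otimes\mscr U$ assemble into $\delta_{ij}$ times the coproduct of $\frac{K_i-K_i'}{q_i-q_i^{-1}}$, the grouplike cross terms cancelling as in the classical case, while the remaining contribution is proportional to $F_jK_i\otimes K_j'E_i$ with coefficient $s_{ij}t_{ji}\bigl(q_i^{-a_{ij}}q_j^{a_{ji}}-1\bigr)$, obtained after commuting $K_i$ past $F_j$ and $E_i$ past $K_j'$ by means of $(\mscr U b)$. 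This coefficient vanishes precisely when $q_i^{a_{ij}}=q_j^{a_{ji}}$, which is the standing assumption; hence $\Delta$ respects $(\mscr U c)$.

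I expect the quantum Serre relations $(\mscr U d)$ to be the only genuinely laborious step. Here I would first record that the two summands $E_i\otimes 1$ and $K_i\otimes E_i$ of $\Delta(E_i)$ satisfy $(K_i\otimes E_i)(E_i\otimes 1)=q_i^{2}(E_i\otimes 1)(K_i\otimes E_i)$ — the $s_{ii},t_{ii}$ contributions cancelling against $(\mscr U b)$ — so the quantum binomial theorem gives a closed formula for $\Delta(E_i^{\langle n\rangle})$ of exactly the same shape as in the ordinary quantum algebra, with $v_i$ replaced by $q_i$. Substituting this formula together with $\Delta(E_j)$ into the Serre sum and tracking the $E_i$--$E_j$ cross-twists shows that the prefactors $(-1)^l s_{ji}^l s_{ij}^{-l}$ are calibrated to cancel precisely those twisting scalars; the computation then collapses, via standard $q$-binomial identities, onto the relation $(\mscr U d)$ already holding in each tensor factor of $\mscr U$, forcing $\Delta$ of the Serre element to vanish. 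The $F$-Serre relation is identical under the symmetry noted above, with $t$ in place of $s$. Finally, coassociativity follows by checking $(\Delta\otimes\mathrm{id})\Delta=(\mathrm{id}\otimes\Delta)\Delta$ on the generators, where the triple twisted product in \eqref{eq44} is exactly the one rendering both sides algebra homomorphisms into $\mscr U^{\otimes 3}$, and the counit axioms are immediate on generators.
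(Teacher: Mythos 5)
Your proposal is correct and takes essentially the same approach as the paper's proof: verify $\Delta$ on the defining relations, using the commutation relation $(K_i\otimes E_i)(E_i\otimes 1)=q_i^{2}(E_i\otimes 1)(K_i\otimes E_i)$, the quantum binomial theorem for $\Delta(E_i^n)$, and the standard $q$-binomial vanishing identity to handle the Serre relations, whose prefactors $(-1)^l s_{ji}^l s_{ij}^{-l}$ absorb the cross-twists so that the Serre element maps to an expression supported on the Serre ideal in each tensor factor. Your explicit computation for $(\mscr U c)$, pinpointing that the hypothesis $q_i^{a_{ij}}=q_j^{a_{ji}}$ is needed exactly to cancel the cross term proportional to $F_jK_i\otimes K_j'E_i$, correctly fills in a verification the paper dismisses as straightforward.
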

\begin{proof} 
We shall show that the comultiplication $\Delta$ is well defined, i.e., the evaluations of $\Delta$ at both sides of the defining relations of $\mscr U$ coincide.
It is straightforward  to verify that this is the case for the relations in
$(\mscr Ua), (\mscr Ub)$ and $(\mscr Uc)$.
We are  left  to check that the evaluations of $\Delta$ at the left-hand sides in $(\mscr Ud)$ are zero.
Set
  $$R_{ij}=\sum_{l=0}^r(-1)^ls_{ji}^ls_{ij}^{-l}\begin{bmatrix}
    r\\l
  \end{bmatrix}_{q_i}E_i^{r-l}E_jE_i^l,\quad \forall i\neq j\in I.$$
  It is convenient to calculate $\Delta(R_{ij})$ in   the  associative algebra
  ${}'\!\mscr U^+$,  generated by $E_i, K_i^{\pm 1},\ \forall i\in I$ and subject to
  the relations ($\mscr{U} a$) and ($\mscr{U} b$).
(Our previous arguments show that the assignments for $\Delta$ on $E_i$ and $K_i$ define an algebra homomorphism on ${}'\!\mscr U^+$.)

  By (\ref{eq44}) and the  relation $(\mscr Ub)$, we have
\begin{equation}
  \label{eqKE}
  (K_i\otimes E_i)(E_i\otimes 1)=q_i^2(E_i\otimes 1)(K_i\otimes E_i).
\end{equation}
 By Section 1.3.5 in \cite{Lusztig93} and (\ref{eqKE}), we have
\begin{equation}
\label{eq1.3.5}
\Delta(E_i^n)=(\Delta(E_i))^n=\sum_{l=0}^n q_i^{l(n-l)}\begin{bmatrix}
  n\\l
\end{bmatrix}_{q_i}(E_i^n\otimes 1)(K_i^{n-l}\otimes E_i^{n-l}).
\end{equation}
  Recall from ~\cite[Section 1.3.4]{Lusztig93} that  we have the following identity
  \begin{equation}\label{eq1.3.4}
    \sum_{l=0}^n(-1)^lq_i^{l(1-n)}\begin{bmatrix}
    n\\l
  \end{bmatrix}_{q_i}=0,\quad \forall n\geq 1.
\end{equation}
With the help of  (\ref{eq1.3.5}) and (\ref{eq1.3.4}), one can directly check that
\begin{equation} \label{eq48}
\Delta(R_{ij})=R_{ij}\otimes 1+K_i^rK_j\otimes R_{ij} \quad \mbox{in ${}'\!\mscr U^+$}.
\end{equation}
Let $\mscr U^+$ be the subalgebra of $\mscr U$ generated by $E_i, K_i^{\pm 1},\ \forall i\in I$.
It is the same as the quotient algebra of ${}'\!\mscr U^+$ by the two-sided ideal $\mscr I$ generated by $R_{ij}, \forall i\neq j\in I$.
By (\ref{eq48}), we have immediately that  $\Delta(R_{ij})=0$ in $\mscr U^+$.
The second relation in ($\mscr{U} d$) can be shown similarly.
This shows that $\Delta$ is well defined.
 The coassociativity of $\Delta$ and the counit can be verified directly.
   \end{proof}

We define a new multiplication on $\mscr U$ by
$$x * y= s_{|y|,|x|}t_{|x|, |y|} yx, \ \forall x, y\in \mscr U\ {\rm homogeneous}.$$
Denote by $\mscr U^{*op}$ the resulting  algebra.
\begin{prop}
 Assume that  $q_i^{a_{ij}}=q_{j}^{a_{ji}}$ for any  $i, j\in I$.
 The algebra $\mscr U$ becomes a twisted Hopf algebra when it is equipped with the comultiplication $\Delta$ and  the counit $\varepsilon$ in (\ref{eq45}) together with   the antipode $S: \mscr U\rightarrow \mscr U^{*op}$ defined as follows.
  $$\begin{array}{llll}
  S(E_i)=-K_i^{-1}E_i,&S(F_i)=-F_iK'^{-1}_i,& S(K_i^{\pm 1})=K_i^{\mp 1},
  & S(K'^{\pm 1}_i)=K'^{\mp 1}_i.
  \end{array}$$
\end{prop}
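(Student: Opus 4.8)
The plan is to build on the Lemma, which already endows $\mscr U$ with the structure of a bialgebra for the comultiplication $\Delta$ and counit $\varepsilon$; what remains is to produce the antipode. I would carry this out in three stages: first, show that the assignment $S$ extends to a well-defined algebra homomorphism $\mscr U \to \mscr U^{*op}$; second, verify the two antipode identities $\sum S(x_{(1)})x_{(2)}=\varepsilon(x)1=\sum x_{(1)}S(x_{(2)})$, where the products are taken in $\mscr U$ with its original multiplication and $\Delta(x)=\sum x_{(1)}\otimes x_{(2)}$, on the generators; and third, propagate these identities from the generators to all of $\mscr U$.

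For the first stage I would observe that $S$ preserves the $\mbb Z[I]$-grading, since $S(E_i)=-K_i^{-1}E_i$, $S(F_i)=-F_iK_i'^{-1}$, and the $S(K_i^{\pm1}),S(K_i'^{\pm1})$ have degrees $i,-i,0,0$ respectively, whence $|S(x)|=|x|$ for homogeneous $x$. Unwinding the definition of $*$, the homomorphism property reads $S(xy)=s_{|y|,|x|}t_{|x|,|y|}S(y)S(x)$, so it suffices to check that $S$, viewed as such a twisted anti-homomorphism, is compatible with each of the defining relations $(\mscr Ua)$–$(\mscr Ud)$. The relations among the $K_i^{\pm1},K_i'^{\pm1}$ in $(\mscr Ua)$ are immediate, and $(\mscr Ub)$, $(\mscr Uc)$ reduce to short computations in which the scalars $s_{ij},t_{ij},t_{ji}$ and the powers $q_i^{\pm a_{ij}}$ recombine correctly.

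For the second stage the generator check is short; for instance $(S\otimes\mrm{id})\Delta(E_i)=-K_i^{-1}E_i\otimes 1+K_i^{-1}\otimes E_i$, and multiplying in $\mscr U$ gives $-K_i^{-1}E_i+K_i^{-1}E_i=0=\varepsilon(E_i)1$, with $F_i$ and the $K$'s entirely analogous. For the third stage the crucial point — the one I would emphasize — is that $t_{\mu,\nu}$ and $s_{\mu,\nu}$ are multiplicative, hence additive in the exponents, in each argument separately. Computing $\sum x_{(1)}S(x_{(2)})$ on a product $xy$ via $\Delta(xy)=\Delta(x)\Delta(y)$ in the twisted tensor product and $S(x_{(2)}y_{(2)})=s_{|y_{(2)}|,|x_{(2)}|}t_{|x_{(2)}|,|y_{(2)}|}S(y_{(2)})S(x_{(2)})$, the scalars attached to the two tensor legs of $\Delta(y)$ collapse through $t_{|x_{(2)}|,|y_{(1)}|}t_{|x_{(2)}|,|y_{(2)}|}=t_{|x_{(2)}|,|y|}$ and the analogous identity for $s$, so that the inner sum $\sum y_{(1)}S(y_{(2)})=\varepsilon(y)1$ factors out cleanly. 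Restricting to homogeneous $y$ (permissible by the grading), $\varepsilon(y)\neq 0$ forces $|y|=0$ by multiplicativity of $\varepsilon$ and hence trivializes the residual scalars, yielding $\sum(xy)_{(1)}S((xy)_{(2)})=\varepsilon(y)\sum x_{(1)}S(x_{(2)})=\varepsilon(x)\varepsilon(y)1=\varepsilon(xy)1$. Thus the set of elements satisfying this antipode identity is a subalgebra; containing the generators and $1$, it is all of $\mscr U$, and the identity $\sum S(x_{(1)})x_{(2)}=\varepsilon(x)1$ is handled symmetrically.

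The main obstacle I anticipate is the first stage applied to the quantum Serre relations $(\mscr Ud)$: one must verify that applying the twisted anti-homomorphism $S$ to $\sum_l(-1)^l s_{ji}^l s_{ij}^{-l}E_i^{\langle r-l\rangle}E_jE_i^{\langle l\rangle}$ reproduces a nonzero scalar multiple of the same Serre element with the order of its factors reversed, which again vanishes in $\mscr U$, and likewise for the $F$-relation. This requires careful bookkeeping of the powers $s_{ji}^l s_{ij}^{-l}$, the normalizations in $E_i^{\langle l\rangle}$, and the degree-dependent twist scalars generated by repeated use of $S(xy)=s_{|y|,|x|}t_{|x|,|y|}S(y)S(x)$; I expect the substitution $l\mapsto r-l$ to carry the expression back onto the Serre element, with all accumulated scalars amounting to a single nonzero factor. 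Since coassociativity and the counit axioms for the underlying bialgebra are already furnished by the Lemma, no further structural verification is needed beyond the antipode.
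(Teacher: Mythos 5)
Your proposal is correct and follows essentially the same route as the paper: the substantive work in both is showing that $S$ is a well-defined algebra homomorphism $\mscr U\to\mscr U^{*op}$ by checking the relations $(\mscr Ua)$--$(\mscr Ud)$, with the Serre relations handled exactly as you anticipate (via $S(E_i^l)=(-1)^lq_i^{l(l-1)}K_i^{-l}E_i^l$ and the index substitution $l\mapsto r-l$, which returns a scalar multiple of $K_i^{-r}K_j$ times the Serre element, hence zero). Your stages two and three, verifying the antipode identities on generators and propagating them multiplicatively using the bi-additivity of the exponents in $s_{\mu,\nu}$, $t_{\mu,\nu}$, are a correct and careful elaboration of what the paper compresses into the single sentence that the Hopf algebra axioms are ``straightforward to verify.''
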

\begin{proof}
  We shall show that $S: \mscr U\rightarrow \mscr U^{*op}$ is an algebra homomorphism.
  It is easy to check that $S$ is compatible with the relations $(\mscr U a)$ and $(\mscr Ub)$.
  For the relation $(\mscr Uc)$,  we have
  \begin{equation*}
    \begin{split}
    &S(E_iF_j-s_{ij}t_{ji}F_jE_i)=s_{ji}^{-1}t_{ij}^{-1}F_jK'^{-1}_jK_i^{-1}E_i-K_i^{-1}E_iF_jK'^{-1}_j\\
    &=s_{ji}^{-1}s_{jj}t_{jj}q_j^{a_{ji}}s_{ij}q_i^{-a_{ij}}K'^{-1}_j K_i^{-1}F_jE_i-
    s_{jj}t_{jj}q_j^{a_{ji}}s_{ji}^{-1}t_{ji}^{-1}q_j^{-a_{ji}}K'^{-1}_j K_i^{-1}E_i F_j\\
    &=s_{jj}t_{jj}s_{ji}^{-1}t_{ji}^{-1}K'^{-1}_j K_i^{-1}(s_{ij}t_{ji}F_jE_i-E_iF_j)
    =\delta_{ij}\frac{S(K_i)-S(K'_i)}{q_i-q_i^{-1}}.
    \end{split}
  \end{equation*}
  This shows that $S$ is compatible with the relation $(\mscr Uc)$.
   Since $S(E_i^l)=(-1)^lq_i^{l(l-1)}K_i^{-l}E_i^l$, we have
  \begin{equation}\label{eq47}
  \textstyle  S( \sum_{l=0}^{r}(-1)^ls_{ji}^ls_{ij}^{-l}
  E_i^{\langle r-l\rangle}E_j E_i^{\langle l\rangle})
  =N \sum_{l=0}^{r}(-1)^ls_{ij}^ls_{ji}^{-l}K_i^{-r}K_j
  E_i^{\langle l\rangle}E_j E_i^{\langle r-l\rangle},
  \end{equation}
 where $N$ is a scalar  independent of $l$. By changing the index, (\ref{eq47}) is zero.
  The second relation in $(\mscr Ud)$ can be checked similarly.
  This shows that $S$ is a well-defined endomorphism of $\mscr U$.
  It is now straightforward to verify that $\mscr U$ together with the  associated data satisfies the Hopf algebra axioms.
  This finishes the proof.
\end{proof}

 A $\mscr{U}$-module $M$ is called a {\it weight module} if  it admits a decomposition $M=\oplus_{\lambda \in X}M_{\lambda}$ of vector spaces such that
$$\textstyle M_{\lambda}=\{m\in M\ |\ K_i\cdot m=
 \msf c_{i,\lambda} q_i^{\lambda_i}m,\ \ K'_i\cdot m=
 \msf c_{i,\lambda} q_i^{-\lambda_i}m,\quad  \forall i\in I\},$$
where
\begin{equation}\label{eq43}
\textstyle \msf c_{i,\lambda}= \prod_{j\in I}(s_{ij}t_{ij})^{-\lambda(j)}.
\end{equation}
Let $\mscr{C}$ be the category of weight modules of $\mscr{U}$.

\subsection{}

The  $modi\! f\! ied$  $algebra$ $\dot{\mscr{U}}$ attached to $\mscr U$
is defined to be  an associative  $\mbb{K}$-algebra  without unit,
 generated by the symbols $1_{\lambda}, E_{\lambda,\lambda-\alpha_i}$ and $F_{\lambda,\lambda+\alpha_i}$, $\forall \lambda\in X$ and $i\in I$,
 and subject to the following relations.
\begin{align*}
&1_{\lambda} 1_{\lambda'} =\delta_{\lambda, \lambda'} 1_{\lambda}. \vspace{6pt}\tag{$\dot{\mscr U}a$}\\
&   E_{\lambda+\alpha_i, \lambda }  1_{\lambda'} =  \delta_{\lambda, \lambda'} E_{\lambda+\alpha_i, \lambda}, \quad
1_{\lambda'}   E_{\lambda, \lambda-\alpha_i} = \delta_{\lambda', \lambda} E_{\lambda, \lambda-\alpha_i},\vspace{6pt}\tag{$\dot{\mscr U}b$}\\
& F_{\lambda-\alpha_i, \lambda}  1_{\lambda'}=  \delta_{\lambda, \lambda'} F_{\lambda-\alpha_i, \lambda}, \quad
1_{\lambda'}  F_{\lambda, \lambda+\alpha_i}=  \delta_{\lambda', \lambda} F_{\lambda, \lambda+\alpha_i}.\vspace{6pt}\\
& \textstyle E_{\lambda,\lambda-\alpha_i} F_{\lambda-\alpha_i,\lambda-\alpha_i+\alpha_j}-s_{ij}t_{ji}F_{\lambda,\lambda+\alpha_j}  E_{\lambda+\alpha_j,\lambda+\alpha_j-\alpha_i}=\delta_{ij} \msf c_{i,\lambda}
[\lambda_i]_{q_i}1_\lambda.\vspace{6pt}\tag{$\dot{\mscr U}c$}\\
 &\textstyle \sum_{l=0}^{r}(-1)^ls_{ji}^ls_{ij}^{-l}E^{\langle r-l\rangle}_{\lambda+r\alpha_i+\alpha_j, \lambda+l\alpha_i+\alpha_j}E_{\lambda+l\alpha_i+\alpha_j, \lambda+l\alpha_i}
  E^{\langle l\rangle}_{\lambda+l\alpha_i,\lambda}=0,\vspace{6pt}
  \tag{$\dot{\mscr U}d$}\\
  &\textstyle \sum_{l=0}^{r}(-1)^lt_{ji}^lt_{ij}^{-l}F^{\langle l\rangle}_{\lambda, \lambda+l\alpha_i}F_{\lambda+l\alpha_i, \lambda+l\alpha_i+\alpha_j}
  F^{\langle r-l\rangle}_{\lambda+l\alpha_i+\alpha_j,\lambda +r\alpha_i+\alpha_j}=0,\quad \forall i\neq j,
\end{align*}
where
 $E^{\langle l\rangle}_{\lambda+l\alpha_i,\lambda}=\frac{1}{[l]^!_{q_i}}
E_{\lambda+l\alpha_i,\lambda+(l-1)\alpha_i}\cdots
E_{\lambda+\alpha_i,\lambda}$
 and
$F^{\langle l\rangle}_{\lambda,\lambda+l\alpha_i}=\frac{1}{[l]^!_{q_i}}
F_{\lambda,\lambda+\alpha_i}\cdots F_{\lambda+(l-1)\alpha_i,\lambda+l\alpha_i}$.
Let
$$\mathscr{A}=\mbb{Z}[s^{\pm 1}_{ij}, t^{\pm 1}_{ij}, q_{i}^{\pm 1/d_i}]_{i,j\in I}$$
be the ring of Laurent polynomials in variables $s_{ij}$, $t_{ij}$ and $q_i^{1/d_i}$.
Let ${}_{\mathscr A}\!\dot{\mathscr U}$ be the $\mathscr{A}$-subalgebra of
$\dot{\mathscr U}$ generated by
 $E^{\langle l\rangle }_{\lambda+l\alpha_i,\lambda}$, $F^{\langle l\rangle }_{\lambda,\lambda+l\alpha_i}$ and $1_{\lambda}$
 for all $\lambda\in X$, $i\in I$ and $l\in \mbb N$.

Let $\dot{\mathscr{C}}$  be the category of unital $\dot{\mathscr{U}}$-modules. Similar to the case of ordinary quantum algebras, we have

\begin{prop} \label{prop4}
  The categories $\dot{\mathscr{C}}$ and $\mathscr{C}$ are  isomorphic.
\end{prop}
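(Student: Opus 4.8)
The plan is to follow Lusztig's proof of $\mbf C \simeq \dot{\mbf C}$ (\cite[23.1.4]{Lusztig93}) line for line, with the bookkeeping of the extra scalars $s_{ij}, t_{ij}$ and $\msf c_{i,\lambda}$ folded in, by exhibiting two mutually inverse functors. From a weight module $M = \oplus_{\lambda\in X} M_\lambda \in \mscr C$ I build a unital $\dot{\mscr U}$-module on the same underlying space: $1_\lambda$ acts as the projection onto $M_\lambda$, and $E_{\lambda+\alpha_i,\lambda}$ and $F_{\lambda-\alpha_i,\lambda}$ act as $E_i$ and $F_i$ pre- and post-composed with the relevant projections. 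Conversely, from a unital module $\dot M \in \dot{\mscr C}$ the orthogonality relations $(\dot{\mscr U}a)$ make $\{1_\lambda\}$ a complete family of orthogonal idempotents, so $\dot M = \oplus_\lambda 1_\lambda \dot M$; I set $M_\lambda = 1_\lambda \dot M$, let $E_i = \sum_\lambda E_{\lambda+\alpha_i,\lambda}$ and $F_i = \sum_\lambda F_{\lambda-\alpha_i,\lambda}$, and let $K_i, K_i'$ act on $M_\lambda$ by the (invertible, since they are units of $\mbb K$) scalars $\msf c_{i,\lambda} q_i^{\lambda_i}$ and $\msf c_{i,\lambda} q_i^{-\lambda_i}$.

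The first technical point is the weight-shift identity: I must check that $E_i$ sends $M_\lambda$ into $M_{\lambda+\alpha_i}$ and $F_i$ into $M_{\lambda-\alpha_i}$. Using $\langle\check\omega_k,\alpha_i\rangle = \delta_{ik}$ one gets $(\lambda+\alpha_i)(k) = \lambda(k) + \delta_{ik}$, whence by (\ref{eq43}) the identity $\msf c_{j,\lambda+\alpha_i} = s_{ji}^{-1} t_{ji}^{-1} \msf c_{j,\lambda}$; combined with $(\lambda+\alpha_i)_j = \lambda_j + a_{ji}$, this matches precisely the scalar $s_{ji}^{-1}t_{ji}^{-1} q_j^{a_{ji}}$ by which $K_j$ conjugates $E_i$ in $(\mscr U b)$, so that $E_i M_\lambda \subseteq M_{\lambda+\alpha_i}$; the symmetric identity $\msf c_{j,\lambda-\alpha_i} = s_{ji}t_{ji}\msf c_{j,\lambda}$ handles $F_i$. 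Granting this, the remaining work is to verify that each construction respects all defining relations. On the $\dot{\mscr U}$ side, $(\dot{\mscr U}a)$ and $(\dot{\mscr U}b)$ are immediate from the idempotent and weight-shift properties, and the Serre relations $(\dot{\mscr U}d)$ reduce to $(\mscr U d)$ after inserting the idempotents $1_\mu$ between the factors and reading off the matching $s$-coefficients (dually, the $t$-coefficients for the $F$ relation). The crucial relation is $(\dot{\mscr U}c)$: on $M_\lambda$ the right-hand side $\frac{K_i - K_i'}{q_i - q_i^{-1}}$ of $(\mscr U c)$ acts by $\msf c_{i,\lambda}\frac{q_i^{\lambda_i} - q_i^{-\lambda_i}}{q_i - q_i^{-1}} = \msf c_{i,\lambda}[\lambda_i]_{q_i}$, which is exactly the coefficient of $1_\lambda$ in $(\dot{\mscr U}c)$, while the left-hand sides agree after inserting idempotents. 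The reverse implications ($\dot{\mscr U}$-relations $\Rightarrow$ $\mscr U$-relations) follow by summing over $\lambda$.

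Finally, the two functors are visibly inverse on underlying spaces and act as the identity on morphisms: a $\mscr U$-module map commutes with $K_i, K_i'$ and hence preserves their joint eigenspaces, so it coincides with a $\dot{\mscr U}$-module map. Here I use that distinct $\lambda$ yield distinct eigenvalue characters---the power of $q_i$ recovers $\lambda_i$ and the $s,t$-monomial $\msf c_{i,\lambda}$ recovers the $\lambda(j)$, by genericity of the parameters---so that the weight decomposition is canonically determined by the module structure. I expect the main obstacle to be purely the bookkeeping of the $s_{ij}, t_{ij}$ factors in $(\dot{\mscr U}c)$ and $(\dot{\mscr U}d)$; once the key identity $\msf c_{j,\lambda+\alpha_i} = s_{ji}^{-1} t_{ji}^{-1}\msf c_{j,\lambda}$ is in hand, every step runs in parallel with \cite[23.1.4]{Lusztig93}.
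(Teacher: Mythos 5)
Your proposal is correct and follows essentially the same route as the paper's own proof: the paper defines exactly the same pair of mutually inverse functors (the idempotents $1_\lambda$ acting as projections onto weight spaces in one direction, and in the other direction $E_i$, $F_i$ acting via $E_{\lambda+\alpha_i,\lambda}$, $F_{\lambda-\alpha_i,\lambda}$ with $K_i$, $K_i'$ acting by the scalars $\msf c_{i,\lambda}q_i^{\pm\lambda_i}$ on $1_\lambda M$). The only difference is one of detail: the paper states the well-definedness of both module structures as routine verifications, whereas you carry them out, including the key weight-shift identity $\msf c_{j,\lambda+\alpha_i}=s_{ji}^{-1}t_{ji}^{-1}\msf c_{j,\lambda}$ and the observation that generic parameters keep distinct weights' characters distinct.
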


\begin{proof}

 We define a functor $\eta: \mscr{C}\rightarrow \dot{\mscr{C}}$ as follows.
 Given a weight $\mscr{U}$-module $M=\oplus_{\lambda\in X}M_{\lambda}$,
 we define a $\dot{\mscr{U}}$-module on $M$ by
 \begin{equation}\label{eq1}
E_{\lambda'+\alpha_i, \lambda'}\cdot m=
\delta_{\lambda, \lambda'}E_i\cdot m,\quad
F_{\lambda'-\alpha_i, \lambda'}\cdot m=
\delta_{\lambda, \lambda'}F_i\cdot m\ {\rm and}\  1_{\lambda'}\cdot m= \delta_{\lambda, \lambda'}m,\quad \forall \ m\in M_{\lambda}.
 \end{equation}
It can be   checked that the $\dot{\mscr{U}}$-module structure on $M$ in (\ref{eq1}) is well-defined. Moreover,
 any morphism $f: M\to N$  in $\mscr{C}$ is again  a morphism in $\dot{\mscr C}$ if $M$ and $N$ are regarded as $\dot{\mscr{U}}$-modules.

We now define a functor $\eta': \dot{\mscr{C}}\rightarrow \mscr{C}.$
Given a unital $\dot{\mscr{U}}$-module $M$, let $M_{\lambda}=1_{\lambda}\cdot M$.
By the relation ($\dot{\U}a$), we have $M_{\lambda}\cap M_{\lambda'}=\{0\}$,
if $\lambda\neq \lambda'$.
Therefore,
we have a decomposition of vector space $M=\oplus_{\lambda\in X}M_{\lambda}$.
The underlying vector space of the $\mscr U$-module $\eta'(M)$ is $M$ and the module structure on $M$ is defined by
\begin{equation}\label{eq2}
\begin{split}
&E_i\cdot m=E_{\lambda+\alpha_i,\lambda}\cdot m,\quad F_i\cdot m=F_{\lambda-\alpha_i,\lambda}\cdot m
,\\
&\textstyle K_i\cdot m=
\msf c_{i,\lambda} q_i^{\lambda_i}m\  {\rm and}\  K'_i\cdot m=
\msf c_{i,\lambda} q_i^{-\lambda_i}m,\quad \forall \ m\in M_{\lambda},\ i\in I.
\end{split}
\end{equation}
One can  check that the $\mscr{U}$-module structure in (\ref{eq2}) is well-defined.
 Similarly, any morphism $f: M\to N$  in $\dot{\mscr C}$ is automatically a morphism in $\mscr{C}$ if $M$ and $N$ are regarded as $\mscr{U}$-modules.

 It is clear that the functors $\eta$ and $ \eta'$ are inverse to each other. This finishes the proof.
\end{proof}

\section{Main result}

In this section, we assume that
\begin{equation}\label{eq35}
  q_i=v_i, \quad \forall i\in I,
\end{equation}
where $v_i$ is defined in (\ref{eq23}).
Let $\psi$ be the assignment
\begin{eqnarray}\label{eq36}
  E'_{\lambda,\lambda-\alpha_i} \mapsto \msf{e}_{i,\lambda} E_{\lambda,\lambda-\alpha_i},\quad  F'_{\lambda-\alpha_i,\lambda} \mapsto \msf{f}_{i,\lambda} F_{\lambda-\alpha_i,\lambda},\quad 1'_{\lambda}\mapsto 1_{\lambda},\quad \forall \lambda\in X, i\in I,
\end{eqnarray}
where
\begin{equation}\label{eq37}
  \textstyle \msf{e}_{i,\lambda}=\prod_{j\in I}s_{ij}^{\lambda(j)}\quad  {\rm and}\quad
\msf{f}_{i,\lambda}=\prod_{j\in I}t_{ij}^{\lambda(j)}.
\end{equation}

\begin{thm}\label{thm8}
Under the assumption (\ref{eq35}), we have the following statements.
   \begin{itemize}
    \item[(a)] The assignment $\psi$ in (\ref{eq36}) defines an algebra isomorphism
    $\xymatrix { {\mbb K} \otimes_{\mbb{Q}(v)} \dot{\U} \ar[r]^-{\sim} &  \dot{\mscr{U}}}$.

    \item[(b)] The restriction of  $\psi$ to ${\mscr A} \otimes_{\mbf{A}} {}_{\mbf A}\!\dot{\U}$ induces an algebra isomorphism $ {\mscr A} \otimes_{\mbf{A}} {}_{\mbf A}\!\dot{\U} \simeq  {}_{\mscr A}\!\dot{\mscr{U}}$.

     \item[(c)]  The image of the canonical basis of $\dot{\U}$ under $\psi$  is a basis of $\dot{\mscr U}$ and a basis of ${}_{\mscr A}\!\dot{\mscr{U}}$.

    \item[(d)]  The category  $\mbf{C}$ of weight modules of $\mbf U$ over $\mbb K$ is isomorphic to  the category $ \mscr{C}$
                     of weight modules of $\mscr U$.
  \end{itemize}

\end{thm}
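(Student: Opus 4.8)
The plan is to prove (a) by a direct generators-and-relations argument and then deduce (b)--(d) formally. For (a) I would check that both $\psi$ and the evident reverse assignment $\phi\colon E_{\lambda,\lambda-\alpha_i}\mapsto \msf e_{i,\lambda}^{-1}E'_{\lambda,\lambda-\alpha_i}$, $F_{\lambda-\alpha_i,\lambda}\mapsto \msf f_{i,\lambda}^{-1}F'_{\lambda-\alpha_i,\lambda}$, $1_\lambda\mapsto 1'_\lambda$ are well-defined algebra homomorphisms; since the defining scalars are units in $\mbb K$, the two are mutually inverse on generators, so (a) follows once both are well defined. Everything reduces to verifying that $\psi$ sends each defining relation $(\dot{\U}a)$--$(\dot{\U}d)$ to a consequence of $(\dot{\mscr U}a)$--$(\dot{\mscr U}d)$ (and symmetrically for $\phi$, by running the same computation backwards). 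All the bookkeeping rests on two elementary facts: since $\langle\check\omega_j,\alpha_k\rangle=\delta_{jk}$ we have the recursions $\msf e_{i,\lambda\pm\alpha_k}=s_{ik}^{\pm1}\msf e_{i,\lambda}$ and $\msf f_{i,\lambda\pm\alpha_k}=t_{ik}^{\pm1}\msf f_{i,\lambda}$; and, by (\ref{eq43}) and (\ref{eq37}), $\msf c_{i,\lambda}=\msf e_{i,\lambda}^{-1}\msf f_{i,\lambda}^{-1}$. The relations $(\dot{\U}a)$ and $(\dot{\U}b)$ are immediate since $\psi$ scales each generator by a nonzero constant and fixes the $1_\lambda$.

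For $(\dot{\U}c)$, applying $\psi$ and the recursions above shows that the two monomials on the left-hand side acquire the common scalar $\msf e_{i,\lambda}\msf f_{j,\lambda}\,t_{jj}t_{ji}^{-1}$; after factoring it out, the surviving bracket is exactly the left-hand side of $(\dot{\mscr U}c)$, namely $\delta_{ij}\msf c_{i,\lambda}[\lambda_i]_{q_i}1_\lambda$. When $i=j$ the prefactor collapses to $\msf e_{i,\lambda}\msf f_{i,\lambda}$, which cancels $\msf c_{i,\lambda}=\msf e_{i,\lambda}^{-1}\msf f_{i,\lambda}^{-1}$, and since $q_i=v_i$ we are left with $[\lambda_i]_{v_i}1_\lambda=\psi(\text{RHS})$; when $i\neq j$ both sides vanish.

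The Serre relation $(\dot{\U}d)$ is where the real bookkeeping lies, and I expect it to be the main obstacle. Using $\psi(E'^{(l)}_{\lambda+l\alpha_i,\lambda})=\msf e_{i,\lambda}^{\,l}\,s_{ii}^{l(l+1)/2}E^{\langle l\rangle}_{\lambda+l\alpha_i,\lambda}$ together with the analogous formulas for the middle factor $E'_{\lambda+l\alpha_i+\alpha_j,\lambda+l\alpha_i}$ and the outer divided power $E'^{(r-l)}_{\lambda+r\alpha_i+\alpha_j,\lambda+l\alpha_i+\alpha_j}$ (all obtained from the recursions), the $l$-th summand acquires a scalar which I would organize by the variable it involves. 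The crucial point to verify is that the contributions of $\msf e_{i,\lambda},\msf e_{j,\lambda},s_{jj}$ and, most delicately, the power of $s_{ii}$ are all independent of $l$: writing $a=l$ and $b=r-l$, the exponent of $s_{ii}$ collapses via $ab+\tfrac{a(a+1)}2+\tfrac{b(b+1)}2=\tfrac{(a+b)(a+b+1)}2=\tfrac{r(r+1)}2$. The only residual $l$-dependence is $s_{ij}^{r-l}s_{ji}^{l}=s_{ij}^{r}\,(s_{ji}^{l}s_{ij}^{-l})$, so after factoring out the common constant the sum becomes exactly $\sum_{l=0}^r(-1)^l s_{ji}^l s_{ij}^{-l}E^{\langle r-l\rangle}E\,E^{\langle l\rangle}$, the left-hand side of $(\dot{\mscr U}d)$, which vanishes. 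Thus the twist $s_{ji}^ls_{ij}^{-l}$ built into $(\dot{\mscr U}d)$ is precisely what is produced by $\psi$. The second relation in $(\dot{\U}d)$ is handled identically with $t$ in place of $s$.

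Parts (b)--(d) are then formal. For (b), the hypothesis $q_i=v_i$ forces $q_i^{1/d_i}=v$, so $\mscr A$ is the base extension of $\mbf A$ by the units $s_{ij}^{\pm1},t_{ij}^{\pm1}$; the displayed formula shows that $\psi$ sends each divided-power generator $E'^{(l)}$, $F'^{(l)}$, $1'_\lambda$ of ${\mscr A}\otimes_{\mbf A}{}_{\mbf A}\!\dot{\U}$ to a unit multiple of the corresponding generator of ${}_{\mscr A}\!\dot{\mscr U}$, and $\phi$ does the same in reverse, so $\psi$ restricts to an isomorphism of the two $\mscr A$-forms. Part (c) is immediate, since an algebra isomorphism carries a basis to a basis: the image under $\psi$ of Lusztig's canonical basis of $\dot{\U}$ is a $\mbb K$-basis of $\dot{\mscr U}$, and by (b) an $\mscr A$-basis of ${}_{\mscr A}\!\dot{\mscr U}$. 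For (d) I would chain three isomorphisms of categories: $\mbf C\simeq\dot{\mbf C}$ over $\mbb K$ (the $\mbb K$-linear version of \cite[23.1.4]{Lusztig93}); the isomorphism $\dot{\mbf C}\simeq\dot{\mscr C}$ obtained by pulling unital modules back along the algebra isomorphism $\psi$ of (a); and $\dot{\mscr C}\simeq\mscr C$ from Proposition \ref{prop4}. Composing yields $\mbf C\simeq\mscr C$.
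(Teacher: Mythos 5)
Your proposal is correct and follows essentially the same route as the paper's proof: verify that $\psi$ respects the relations using the recursions $\msf e_{i,\lambda+\alpha_k}=s_{ik}\msf e_{i,\lambda}$, $\msf f_{i,\lambda+\alpha_k}=t_{ik}\msf f_{i,\lambda}$ and $\msf e_{i,\lambda}\msf f_{i,\lambda}=\msf c_{i,\lambda}^{-1}$, extract the common scalar so that the residual $l$-dependence $s_{ji}^l s_{ij}^{-l}$ matches the twist in $(\dot{\mscr U}d)$, exhibit the explicit inverse $\phi$, and deduce (b)--(d) formally via Proposition \ref{prop4}. Your explicit collapse of the $s_{ii}$-exponent, $l(r-l)+\tfrac{l(l+1)}2+\tfrac{(r-l)(r-l+1)}2=\tfrac{r(r+1)}2$, is precisely the ``direct calculation'' the paper leaves unwritten.
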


\begin{proof}
We need to show that the assignment $\psi$ in (\ref{eq36}) defines an algebra homomorphism.
 By the definitions of $\msf e_{i,\lambda}$ and $\msf f_{i,\lambda}$ in (\ref{eq37}), we have
\begin{equation}\label{eq7}
\begin{split}
\msf e_{i, \lambda+\alpha_j}=\msf e_{i, \lambda} s_{ij},\quad \msf f_{i, \lambda+\alpha_j}=f_{i, \lambda}t_{ij},\quad \forall i, j\in I.
\end{split}
\end{equation}
By (\ref{eq7}), we have that
$\msf f_{j,\lambda+\alpha_j}\msf e_{i,\lambda+\alpha_j}=\msf e_{i,\lambda}\msf f_{j,\lambda-\alpha_i+\alpha_j}s_{ij}t_{ji}$ and
$\msf e_{i,\lambda}\msf f_{i,\lambda}=\msf c_{i,\lambda}^{-1}$. So
 \begin{equation}\label{eq6}
\begin{split}
& \psi( E'_{\lambda,\lambda-\alpha_i} F'_{\lambda-\alpha_i,\lambda-\alpha_i+\alpha_j}-F'_{\lambda,\lambda+\alpha_j}  E'_{\lambda+\alpha_j,\lambda+\alpha_j-\alpha_i}-\delta_{ij}[\lambda_i]_{v_i}1'_{\lambda})=0,\quad \forall i, j\in I.
\end{split}
\end{equation}
Under the assumption (\ref{eq35}), we have
$$\textstyle \psi(E'^{(l)}_{\lambda+l\alpha_i,\lambda})=
\prod_{k=1}^l\msf e_{i,\lambda+k\alpha_i}E^{\langle l\rangle}_{\lambda+l\alpha_i,\lambda}
=\msf e_{i,\lambda}^l s_{ii}^{l(l+1)/2}E^{\langle l\rangle}_{\lambda+l\alpha_i,\lambda}.$$
 So
\begin{equation*}
\begin{split}
&\psi(E'^{(r-l)}_{\lambda+r\alpha_i+\alpha_j, \lambda+l\alpha_i+\alpha_j}
E'_{\lambda+l\alpha_i+\alpha_j, \lambda+l\alpha_i}
  E'^{(l)}_{\lambda+l\alpha_i,\lambda})\vspace{6pt}
=N_1E^{\langle r-l\rangle}_{\lambda+r\alpha_i+\alpha_j, \lambda+l\alpha_i+\alpha_j}E_{\lambda+l\alpha_i+\alpha_j, \lambda+l\alpha_i}
  E^{\langle l\rangle}_{\lambda+l\alpha_i,\lambda}.
\end{split}
\end{equation*}
where $N_1=\msf e_{i,\lambda+l\alpha_i+\alpha_j}^{r-l}\msf e_{j,\lambda+l\alpha_i}
\msf e_{i,\lambda}^ls_{ii}^{l(l+1)/2+(r-l)(r-l+1)/2}$.
A direct calculation shows that $N_1=N_2s_{ij}^{-l}s_{ji}^l$,
where $N_2$ is a number independent of $l$. So
\begin{equation}\label{eq21}
\textstyle \psi(\sum_{l=0}^r(-1)^lE'^{(r-l)}_{\lambda+r\alpha_i+\alpha_j, \lambda+l\alpha_i+\alpha_j}
E'_{\lambda+l\alpha_i+\alpha_j, \lambda+l\alpha_i}
  E'^{(l)}_{\lambda+l\alpha_i,\lambda})=0
\end{equation}
Similarly, we have
\begin{equation}\label{eq22}
\textstyle \psi(\sum_{l=0}^r(-1)^lF'^{(l)}_{\lambda, \lambda+l\alpha_i}F'_{\lambda+l\alpha_i, \lambda+l\alpha_i+\alpha_j}
  F'^{(r-l)}_{\lambda+l\alpha_i+\alpha_j,\lambda +r\alpha_i+\alpha_j})=0.
\end{equation}

By (\ref{eq6}), (\ref{eq21}) and (\ref{eq22}), the assignment $\psi$ induces an algebra homomorphism
$\psi: {\mscr A} \otimes_{\mbf{A}} {}_{\mbf A}\!\dot{\U} \rightarrow  {}_{\mscr A}\!\dot{\mscr{U}}$.
 By a similar argument, the following map defines the inverse of $\psi$:
\begin{eqnarray*}
  E_{\lambda,\lambda-\alpha_i} \mapsto \msf e_{i,\lambda}^{-1} E'_{\lambda,\lambda-\alpha_i},\quad  F_{\lambda-\alpha_i,\lambda} \mapsto \msf f_{i,\lambda}^{-1} F'_{\lambda-\alpha_i,\lambda},\quad 1_{\lambda}\mapsto 1'_{\lambda}.
\end{eqnarray*}
This proves (a). Since $\psi$ maps $ {\mscr A} \otimes_{\mbf{A}} {}_{\mbf A}\!\dot{\U}$ onto $  {}_{\mscr A}\!\dot{\mscr{U}}$, the statement  (b) follows.
The statement  (c)  follows from  (a) and (b). The statement  (d) follows from  (a) and (b) and Proposition \ref{prop4}.
\end{proof}

We remark that (\ref{eq37}) is a multiplicative version of the numbers $e_{\lambda,\alpha_i}$ and $f_{\lambda, \alpha_i}$ in ~\cite[5.1]{Li10}.
In what follows, we shall  deduce similar results for various two/multiparameter and multiparameter super analogues by specializing the parameters.

\subsection{Two-parameter case}

 Let $\Omega=(${\scriptsize  $ \Omega$}$_{ij})_{i,j\in I}$ be a matrix satisfying that

\begin{itemize}
  \item[(a)] {\scriptsize  $ \Omega$}$_{ii} \in \mathbb{Z}_{>0}$, {\scriptsize  $ \Omega$}$_{ij}\in \mathbb{Z}_{\leq 0}$ for all $i\neq j \in I$;

  \item[(b)] $\frac{\Omega_{ij}+\Omega_{ji}}{\Omega_{ii}}\in \mathbb{Z}_{\leq 0}$ for all $i\neq j \in I$;

  \item[(c)] {\scriptsize  $ \Omega$}$_{ij}+${\scriptsize  $ \Omega$}$_{ji}=i\cdot j,$ for all $i, j\in I$.

\end{itemize}
Let $t$ be a new parameter.
We specialize  the families  $\mbf{q, s\ {\rm and}\ t}$  of parameters as follows.
\begin{equation}\label{eq38}
  q_i=v_i,\quad s_{ij}=t^{-\Omega_{ij}}\quad {\rm and}\quad t_{ij}=t^{\Omega_{ji}}, \quad \forall i, j\in I.
\end{equation}
In the table below, we list the objects before the  specialization in (\ref{eq38})  at the first row and right below each object
 the resulting objects  after the specialization at  the second row.
\begin{center}
\renewcommand\arraystretch{1.6}
\begin{tabular}{|p{15pt}|p{20pt}|p{15pt}|p{15pt}|p{40pt}|p{80pt}|p{110pt}|}
 \hline
$\dot{\mscr{U}}$ & ${}_{\mscr A}\!\dot{\mscr{U}}$ &$\mscr{U}$ & $\mscr C$ &  $\mbb K$ & $\mscr A$  &  $\msf c_{i,\lambda}$ \\
\hline
$\dot{\mathfrak{U}}$& ${}_{\mrk Z}\!\dot{\mrk{U}}$ &$\mathfrak{U}$ &$\mrk C$& $\mbb Q(v,t)$ & $\mrk Z=\mbb{Z}[v^{\pm 1}, t^{\pm 1}]$ & $\sum_{j\in I}\lambda(j)$({\scriptsize $\Omega_{ij}-\Omega_{ji}$})\\
\hline
\end{tabular}
\end{center}
In particular,  $\mathfrak{U}$ is the two-parameter quantum algebra associated to $\Omega$ in \cite{FL12},  $\dot{\mathfrak{U}}$ its modified form,
and $\mathfrak{C}$  the category of weight modules of $\mathfrak{U}$. By Theorem ~\ref{thm8}, we have

\begin{cor}\label{cor1}
The statements in Theorem \ref{thm8} hold if   $\dot{\mscr{U}}$ (resp. $\mbb K$, $\mscr A$ and $\mscr C$) is replaced by  its specialization
$\dot{\mathfrak{U}}$ (resp. $\mbb Q(v,t)$, $\mrk Z$ and $\mathfrak{C}$).
\end{cor}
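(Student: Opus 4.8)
The plan is to derive Corollary \ref{cor1} directly from Theorem \ref{thm8} by verifying that the specialization (\ref{eq38}) is compatible with all the structures appearing in that theorem, rather than re-running the argument from scratch. The key observation is that Theorem \ref{thm8} is a statement about the universal algebra $\mscr U$ and its modified form $\dot{\mscr U}$ over the field $\mbb K$ (and the ring $\mscr A$), and the two-parameter objects $\mrk U$, $\dot{\mrk U}$ are by definition obtained from $\mscr U$, $\dot{\mscr U}$ by the ring homomorphism $\mscr A \to \mrk Z = \mbb Z[v^{\pm 1}, t^{\pm 1}]$ (extended to the fraction fields $\mbb K \to \mbb Q(v,t)$) sending $q_i \mapsto v_i$, $s_{ij}\mapsto t^{-\Omega_{ij}}$, $t_{ij}\mapsto t^{\Omega_{ji}}$. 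First I would record that this specialization is well defined, i.e.\ that the images $t^{-\Omega_{ij}}, t^{\Omega_{ji}} \in \mrk Z$ are indeed units (immediate since $t$ is invertible in $\mrk Z$) and that the formal root $q_i^{1/d_i}$ specializes to $v^{1/2}$-type quantities consistently; this is where assumption (\ref{eq35}) feeds in through $q_i = v_i = v^{d_i}$.

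Next I would check the hypothesis $q_i^{a_{ij}} = q_j^{a_{ji}}$ needed for the bialgebra and Hopf structures, which under (\ref{eq35}) reads $v_i^{a_{ij}} = v_j^{a_{ji}}$; this is the standard symmetry $d_i a_{ij} = i\cdot j = d_j a_{ji}$ of a Cartan datum and so holds automatically, guaranteeing that $\mrk U$ inherits the twisted Hopf structure. Then I would verify that the entries of the table are genuinely the images of the universal objects under specialization: the field $\mbb K$ maps to $\mbb Q(v,t)$, the ring $\mscr A$ to $\mrk Z$, and the scalar $\msf c_{i,\lambda} = \prod_{j} (s_{ij}t_{ij})^{-\lambda(j)}$ from (\ref{eq43}) becomes $\prod_j (t^{-\Omega_{ij}} t^{\Omega_{ji}})^{-\lambda(j)} = \prod_j t^{\lambda(j)(\Omega_{ij}-\Omega_{ji})}$, matching the last column of the table. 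Conditions (a)--(c) on $\Omega$ (in particular $\Omega_{ij}+\Omega_{ji} = i\cdot j$) are precisely what make this specialization land in the two-parameter quantum algebra $\mrk U$ of \cite{FL12} with its correct relations, so I would cite that identification rather than re-verify the relations term by term.

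With these compatibilities in place, the corollary follows formally. The isomorphism $\psi$ of Theorem \ref{thm8}(a),(b) is defined over $\mscr A$ via the twisting scalars $\msf e_{i,\lambda} = \prod_j s_{ij}^{\lambda(j)}$, $\msf f_{i,\lambda} = \prod_j t_{ij}^{\lambda(j)}$ of (\ref{eq37}), which are units in $\mscr A$ and hence remain units after specialization (they become $\prod_j t^{-\Omega_{ij}\lambda(j)}$ and $\prod_j t^{\Omega_{ji}\lambda(j)}$ in $\mrk Z$). Applying the functor $\mrk Z \otimes_{\mscr A} (-)$ to the $\mscr A$-module isomorphism ${}_{\mscr A}\!\dot{\mscr U} \cong \mscr A \otimes_{\mbf A} {}_{\mbf A}\!\dot{\U}$ therefore yields the $\mrk Z$-algebra isomorphism ${}_{\mrk Z}\!\dot{\mrk U} \cong \mrk Z \otimes_{\mbf A} {}_{\mbf A}\!\dot{\U}$, and similarly over the fraction field; base change sends the canonical-basis statement (c) and the category equivalence (d) (via Proposition \ref{prop4}, whose proof is uniform in the parameters) to their two-parameter counterparts.

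The main obstacle I anticipate is purely bookkeeping rather than conceptual: one must confirm that the twisting factors $\msf e_{i,\lambda}, \msf f_{i,\lambda}$ and the scalar $\msf c_{i,\lambda}$ specialize coherently, and in particular that no specialized parameter degenerates (e.g.\ no $s_{ij}t_{ji}$ collapsing the $(\mscr U c)$/$(\dot{\mscr U}c)$ relation); condition (c) on $\Omega$ ensures the commutation scalar $s_{ij}t_{ji} \mapsto t^{-\Omega_{ij}}t^{-\Omega_{ij}}$ stays generic in $t$, so the obstacle does not in fact arise. Hence the proof reduces to observing that every ingredient of Theorem \ref{thm8} is defined functorially over $\mscr A$ and survives the flat base change $\mscr A \to \mrk Z$.
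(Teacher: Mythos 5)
Your overall strategy is exactly the paper's own: the paper proves Corollary \ref{cor1} by nothing more than the table identifying the specializations under (\ref{eq38}) of $\dot{\mscr U}$, ${}_{\mscr A}\!\dot{\mscr U}$, $\mbb K$, $\mscr A$, $\mscr C$ and $\msf c_{i,\lambda}$ with the two-parameter objects of \cite{FL12}, followed by invoking Theorem \ref{thm8}; your base-change formulation, the check of $q_i^{a_{ij}}=q_j^{a_{ji}}$ (harmless, though not needed for the modified-form statements), and your computation of $\msf c_{i,\lambda}$ all make that precise and are accurate.

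Two details in your write-up are incorrect, though neither sinks the argument. First, the specialization $\mscr A\to\mrk Z$ does \emph{not} extend to a field homomorphism $\mbb K\to\mbb Q(v,t)$: its kernel is nonzero (for instance $s_{ij}s_{ji}t_{ij}t_{ji}-1\mapsto 0$), and any homomorphism of fields is injective. So Theorem \ref{thm8}(a) cannot be transported by base change along a map $\mbb K\to\mbb Q(v,t)$. The correct route is the one you in fact follow for part (b): base-change the integral-form isomorphism along $\mscr A\to\mrk Z$ to get ${}_{\mrk Z}\!\dot{\mathfrak{U}}\simeq \mrk Z\otimes_{\mbf A}{}_{\mbf A}\!\dot{\U}$, and then obtain the field-level statement by applying $\mbb Q(v,t)\otimes_{\mrk Z}(-)$, using that the modified algebras are spanned over the field by their integral forms (their defining generators $E_{\lambda,\lambda-\alpha_i}$, $F_{\lambda-\alpha_i,\lambda}$, $1_\lambda$ are integral). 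Alternatively, and closest to the paper's intent, note that the proof of Theorem \ref{thm8} is purely equational in the parameters, with twisting scalars $\msf e_{i,\lambda},\msf f_{i,\lambda}$ given by Laurent monomials, so it runs verbatim after the substitution (\ref{eq38}). Second, your final computation is off: $s_{ij}t_{ji}\mapsto t^{-\Omega_{ij}}\cdot t^{\Omega_{ij}}=1$, not $t^{-2\Omega_{ij}}$, and in particular it is not ``generic in $t$''; this is harmless --- it merely says that $E_i$ and $F_j$ commute in $\mathfrak{U}$ for $i\neq j$, as they should --- but the justification you attached to that step is spurious, and no genericity of the commutation scalar is needed anywhere.
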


Notice that under the isomorphism defined by (\ref{eq36}) after specialization, the Verma and simple modules of $\U$  get sent to the respective modules of 
$\mathfrak{U}$. This implies that the decomposition numbers for simple modules in a Verma module of $\mathfrak U$ can be described by the Kazhdan-Lusztig polynomials (specialized at $q=1$) in  exactly the same manner as in  the setting  of the ordinary quantum algebras. 
The phenomenon applies to the rest of the cases when an identification of modified forms is established.

\subsection{Multi-parameter case}

Let $(q_{ij})_{i,j\in I}$ be a family of  indeterminates such that $q_{ij}q_{ji}=q_{ii}^{a_{ij}}$.
In this subsection, we specialize the families $\mbf{q, s}$ and $\mbf t$ as follows.
\begin{equation}\label{eq39}
  q_i=q_{ii}^{1/2},\quad s_{ij}=q^{1/2}_{ji}\quad {\rm and }\quad t_{ij}=q^{-1/2}_{ij},\quad \forall i, j\in I.
\end{equation}
Similarly, we use the following table to indicate the resulting objects after the specialization of $(\mbf{q, s, t})$ at (\ref{eq39}).
\begin{center}
\renewcommand\arraystretch{1.6}
\begin{tabular}{|p{15pt}|p{20pt}|p{15pt}|p{15pt}|p{100pt}|p{115pt}|p{90pt}|}
 \hline
$\dot{\mscr{U}}$ &${}_{\mscr A}\!\dot{\mscr{U}}$ &$\mscr{U}$ &$\mscr C$ & $\mbb K$ & $\mscr A$  &  $\msf c_{i,\lambda}$ \\
\hline
$\dot{\mcal{U}}$ &${}_{\mcal A}\!\dot{\mcal{U}}$& $\mcal{U}$ &$\mcal C$& $\mbb F=\mbb Q(q_{ij}^{1/2}, q_{ii}^{1/(2d_i)})$
 & $\mcal{A}=\mbb{Z}[q_{ij}^{\pm 1/2}, q_{ii}^{\pm 1/(2d_i)}]$ & $\prod_{j\in I}(q_{ij}q_{ji}^{-1})^{\lambda(j)/2}$\\
\hline
\end{tabular}
\end{center}
We note that $\mcal{U}$ and $\dot{\mcal{U}}$ are
 the multi-parameter quantum algebra  in \cite{HPR10} and its modified form, respectively, and
 $\mcal{C}$ is the category of weight modules of $\mcal{U}$.


\begin{cor}\label{cor2}
Assume that $q_{ii}^{1/2}=v_i$ for any $ i\in I$.
Then the statements in Theorem \ref{thm8} hold if   $\dot{\mscr{U}}$ (resp. $\mbb K$, $\mscr A$ and $\mscr C$) is replaced by its specialization $\dot{\mathfrak{U}}$ (resp. $\mbb F$, $\mcal A$ and $\mathfrak{C}$).
\end{cor}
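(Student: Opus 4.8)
The plan is to obtain Corollary~\ref{cor2} as a specialization of Theorem~\ref{thm8}, in exactly the manner of Corollary~\ref{cor1}. Concretely, I would check that the substitution (\ref{eq39}) is an \emph{admissible} specialization of the family $(\mbf{q,s,t})$, that is, one compatible with the hypothesis (\ref{eq35}) of Theorem~\ref{thm8}, and that under it the universal objects $\mscr U$, $\dot{\mscr U}$, $\mbb K$, $\mscr A$, $\mscr C$ and the scalar $\msf c_{i,\lambda}$ become precisely the multi-parameter objects $\mcal U$, $\dot{\mcal U}$, $\mbb F$, $\mcal A$, $\mcal C$ and the scalar recorded in the table. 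Once this is in place, the four conclusions of Theorem~\ref{thm8} transport verbatim.

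First I would verify admissibility. Under (\ref{eq39}) we have $q_i=q_{ii}^{1/2}$, so the standing hypothesis $q_{ii}^{1/2}=v_i$ is nothing other than condition (\ref{eq35}). The bialgebra condition $q_i^{a_{ij}}=q_j^{a_{ji}}$ then holds automatically: it follows from $q_i=v_i$ together with the identity $d_i a_{ij}=i\cdot j=d_j a_{ji}$, or equivalently from the defining constraint $q_{ij}q_{ji}=q_{ii}^{a_{ij}}$ imposed on the multi-parameter family, upon taking square roots. Hence the specialization is legitimate.

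Next I would substitute (\ref{eq39}) into the defining relations $(\mscr U a)$--$(\mscr U d)$ and confirm that they reproduce the relations of the multi-parameter algebra $\mcal U$ of \cite{HPR10}. The decisive computations are short: in $(\mscr U b)$ the scalar $s_{ij}^{-1}t_{ij}^{-1}q_i^{a_{ij}}$ collapses to $q_{ij}$, using $q_i^{a_{ij}}=(q_{ij}q_{ji})^{1/2}$; in $(\mscr U c)$ the cross coefficient becomes $s_{ij}t_{ji}=q_{ji}^{1/2}q_{ji}^{-1/2}=1$; and the Serre coefficients $s_{ji}^{l}s_{ij}^{-l}$ and $t_{ji}^{l}t_{ij}^{-l}$ of $(\mscr U d)$ pass to the expected multi-parameter ones. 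The same substitution carries the weight scalar $\msf c_{i,\lambda}=\prod_{j}(s_{ij}t_{ij})^{-\lambda(j)}$ to $\prod_{j}(q_{ij}q_{ji}^{-1})^{\lambda(j)/2}$, in agreement with the table, so that $\mscr C$ specializes to $\mcal C$ and $\dot{\mscr U}$ to $\dot{\mcal U}$.

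Finally I would transport the isomorphism $\psi$. Under (\ref{eq39}) the twisting coefficients of (\ref{eq37}) specialize to the units $\msf e_{i,\lambda}=\prod_{j}q_{ji}^{\lambda(j)/2}$ and $\msf f_{i,\lambda}=\prod_{j}q_{ij}^{-\lambda(j)/2}$ of $\mbb F$, which lie in $\mcal A$ together with their inverses; hence $\psi$ and its inverse from Theorem~\ref{thm8} stay well-defined over $\mbb F$ and over $\mcal A$. Since every identity used in the proof of Theorem~\ref{thm8}---in particular (\ref{eq6}), (\ref{eq21}) and (\ref{eq22})---is a formal relation among the parameters that survives the substitution, the specialized $\psi$ is an algebra isomorphism $\mbb F\otimes_{\mbb{Q}(v)}\dot{\U}\simeq\dot{\mcal U}$, which is part~(a); the integral refinement~(b), the canonical-basis statement~(c), and the category equivalence~(d) then follow exactly as in Theorem~\ref{thm8}, the last one via Proposition~\ref{prop4}. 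The only genuinely delicate point, and the main obstacle, is the bookkeeping with roots: one must choose the square roots $q_{ij}^{1/2}$ and the higher roots $q_{ii}^{1/(2d_i)}$ compatibly so that $q_{ij}q_{ji}=q_{ii}^{a_{ij}}$ and $q_{ii}^{1/2}=v_i$ hold at once, and one must confirm that the specialized coefficients indeed lie in $\mcal A$ so that statement~(b) is meaningful.
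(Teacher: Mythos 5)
Your proposal is correct and follows essentially the same route as the paper: Corollary \ref{cor2} is obtained purely by specializing the parameters as in (\ref{eq39}), verifying that this specialization satisfies the hypothesis (\ref{eq35}) of Theorem \ref{thm8} and turns $\mscr U$, $\dot{\mscr U}$, $\mbb K$, $\mscr A$, $\mscr C$, $\msf c_{i,\lambda}$ into the multi-parameter objects of the table, after which the four statements transport verbatim (the proof of Theorem \ref{thm8} being a formal computation valid for any invertible values of $s_{ij}, t_{ij}$). Your coefficient checks ($s_{ij}^{-1}t_{ij}^{-1}q_i^{a_{ij}}=q_{ij}$, $s_{ij}t_{ji}=1$, and the specialization of $\msf c_{i,\lambda}$ and of $\msf e_{i,\lambda},\msf f_{i,\lambda}$) all agree with the paper's table and with (\ref{eq37}).
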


\subsection{Multi-parameter supercase, I}

We fix families of parameters  $\theta=\{\theta_{ij}\}_{i,j\in I}$ and $\mbf p=(\{p_{ij}\}_{i,j\in I}, \{p_i\}_{i\in I})$ such  that
\begin{equation}\label{eq24}
  p_{ij}^2=p_i^{2a_{ij}},\quad (p_{ij}p_{ji})/(\theta_{ij}\theta_{ji})=p_i^{2a_{ij}} \quad {\rm and}\quad  p_{ii}/\theta_{ii}=p_i^2,\  \forall i, j\in I.
\end{equation}
We further assume that
\begin{equation}\label{eq26}
p_i=v_i\gamma_i\ {\rm such\ that}\ \gamma_i^2=1,\quad \forall i\in I.
\end{equation}
For simplicity, we use the following  notations.
$$\tau_{ij}=p_{ij}p_i^{-a_{ij}},\quad \gamma_{ij}=\gamma_i^{a_{ij}}.$$
In this subsection, we shall specialize the families $\mbf{q, s}$ and $\mbf t$ in the following way.
\begin{equation}\label{eq40}
  q_i=p_i\gamma_i,\quad s_{ij}=\left\{\begin{array}{ll}
   \theta_{ij}^{-1}\gamma_{ij}& {\rm if}\ i\geq j,\vspace{3pt}\\
   \tau_{ji} & {\rm if}\ i<j,
 \end{array}\right.
 \quad
{\rm and }\quad t_{ij}=\left\{\begin{array}{ll}
   \theta_{ij}\tau_{ij}& {\rm if}\ i\geq j,\vspace{3pt}\\
   \tau_{ij}\tau_{ji}\gamma_{ij} & {\rm if}\ i<j,
 \end{array}\right.
 \quad \forall i, j\in I,
\end{equation}
where we fix a total order ``$<$'' on $I$.
Let $\mscr J$ be the two-sided ideal of $\mscr{U}$ generated by $K_iK_i'-1, \forall i\in I$.
The specialization of $\dot{\mscr{U}}$ (resp.  $\mscr C$, $\mbb K$, $\mscr A$ and $\msf c_{i,\lambda}$) at (\ref{eq40}) is as follows.
\begin{center}
\renewcommand\arraystretch{1.6}
\begin{tabular}{|p{15pt}|p{20pt}|p{28pt}|p{15pt}|p{115pt}|p{125pt}|p{73pt}|}
 \hline
 $\dot{\mscr{U}}$ &${}_{\mscr A}\!\dot{\mscr{U}}$&$\mscr{U}/\mscr J$ &$\mscr C$&  $\mbb K$ & $\mscr A$  &  $\msf c_{i,\lambda}$ \\
\hline
 $\dot{\msf{U}}_{\theta, \mbf p}$&${}_{\msf A}\!\dot{\msf{U}}_{\theta, \mbf p}$&$\msf{U}_{\theta, \mbf p}$ & $\msf C_{\theta,\mbf p}$ & $\mbb T=\mbb{Q}(\theta_{ij}, p_{ij}, p_{i}^{1/(2d_i)})$
 & $\msf A=\mbb{Z}[\theta_{ij}^{\pm 1}, p_{ij}^{\pm 1}, p_{i}^{\pm 1/(2d_i)} ]$ & $\prod_{j\in I}\tau_{ij}^{\lambda(j)}\gamma_{ij}^{\lambda(j)}$\\
\hline
\end{tabular}
\end{center}
We note that  $\msf c_{i,\lambda}^2=1$ after the specialization  under the assumptions (\ref{eq24}) and (\ref{eq26}).
Notice that  the quantum algebra $\msf U_{\theta, \mbf p}$ is the algebra  $U_{\theta, \mbf p}(\mrk g)$
 introduced in ~\cite{KKO13}.
From Theorem \ref{thm8}, we have

 \begin{cor}\label{cor3}
 Under the assumption (\ref{eq26}), we have  the following statements.
\begin{itemize}
\item[(a)] There is an isomorphism of algebras $\msf A\otimes_{\mbf A} {}_{\mbf A}\!\dot{\U}\simeq {}_{\msf A}\!\dot{\msf{U}}_{\theta, \mbf p}$.

\item[(b)] The conjecture 2.8 in \cite{KKO13} holds.
\end{itemize}
\end{cor}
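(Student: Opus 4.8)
The plan is to obtain both statements from Theorem~\ref{thm8} by checking that the specialization (\ref{eq40}) falls under its hypothesis and then transporting structure across the resulting isomorphism. For part~(a), I would first observe that the extra assumption (\ref{eq26}) forces
\[
q_i = p_i\gamma_i = (v_i\gamma_i)\gamma_i = v_i\gamma_i^2 = v_i,
\]
so the specialization (\ref{eq40}) automatically satisfies the hypothesis (\ref{eq35}) of Theorem~\ref{thm8}; in particular the symmetry $q_i^{a_{ij}}=q_j^{a_{ji}}$ reduces to $v_i^{a_{ij}}=v_j^{a_{ji}}$, which holds since $d_ia_{ij}=i\cdot j$ is symmetric in $i,j$. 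Hence Theorem~\ref{thm8}(b) yields the isomorphism ${}_{\mscr A}\!\dot{\mscr U}\simeq \mscr A\otimes_{\mbf A}{}_{\mbf A}\!\dot{\U}$ over the unspecialized ring $\mscr A$.

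Next I would base change this isomorphism along the ring homomorphism $\mscr A\to\msf A$ determined by (\ref{eq40}), so that $\msf A\otimes_{\mscr A}{}_{\mscr A}\!\dot{\mscr U}\simeq \msf A\otimes_{\mbf A}{}_{\mbf A}\!\dot{\U}$. The substantive point is to identify the left-hand side with ${}_{\msf A}\!\dot{\msf U}_{\theta,\mbf p}$, which amounts to comparing, relation by relation, the images under (\ref{eq40}) of $(\dot{\mscr U}a)$--$(\dot{\mscr U}d)$ with the defining relations of the modified form of the Kang--Kashiwara--Oh algebra $U_{\theta,\mbf p}(\mrk g)$. Since the modified form involves no $K_i,K_i'$, the passage from $\mscr U$ to $\mscr U/\mscr J$ is invisible at the level of generators; its only trace is the identity $\msf c_{i,\lambda}^2=1$, which follows from (\ref{eq24}) and (\ref{eq26}) as already noted. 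Concretely, every specialized weight module automatically satisfies $K_iK_i'=1$ because $\msf c_{i,\lambda}^2=1$, so it is a module over $\mscr U/\mscr J$; consequently the specialized $\dot{\mscr U}$ is the modified form $\dot{\msf U}_{\theta,\mbf p}$ of $\mscr U/\mscr J$. Composing gives $\msf A\otimes_{\mbf A}{}_{\mbf A}\!\dot{\U}\simeq {}_{\msf A}\!\dot{\msf U}_{\theta,\mbf p}$, which is~(a).

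For part~(b), I would run the categorical statement Theorem~\ref{thm8}(d) through the same specialization, exactly as in Corollaries~\ref{cor1} and~\ref{cor2}: Proposition~\ref{prop4} applied to $\msf U_{\theta,\mbf p}$ together with part~(a) produces an isomorphism of categories $\mbf C\simeq\msf C_{\theta,\mbf p}$ carrying the functors $\eta,\eta'$. Conjecture~2.8 of \cite{KKO13} is a statement about this representation category that is preserved under such an isomorphism, so it follows by transporting the corresponding known fact for the ordinary quantum algebra $\U$. Namely, under the identification the Verma and simple objects of $\U$ are matched with those of $\msf U_{\theta,\mbf p}$, so characters and decomposition multiplicities agree term by term; in the generic case these are controlled by the Kazhdan--Lusztig polynomials via the category isomorphism $\mbf C\simeq\dot{\mbf C}$ of \cite[23.1.4]{Lusztig93} and the standard highest-weight theory of $\dot{\U}$. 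This establishes~(b).

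The step I expect to be the main obstacle is the relation-matching in part~(a): confirming that the piecewise values of $s_{ij}$ and $t_{ij}$ in (\ref{eq40}), together with the order-dependent factors $\gamma_{ij}=\gamma_i^{a_{ij}}$ and $\tau_{ij}=p_{ij}p_i^{-a_{ij}}$, reproduce on the nose the super quantum Serre coefficients of \cite{KKO13}. The asymmetry introduced by the total order ``$<$'' means the coefficients $s_{ji}^ls_{ij}^{-l}$ and $t_{ji}^lt_{ij}^{-l}$ in $(\dot{\mscr U}d)$ must be evaluated separately for $i<j$ and $i>j$, and one must check that the two case outputs assemble into the single uniform relation of \cite{KKO13}; this is bookkeeping, but the sign and order subtleties are where care is needed.
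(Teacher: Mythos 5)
Your proposal is correct and follows essentially the same route as the paper: the paper derives Corollary~\ref{cor3} directly from Theorem~\ref{thm8} by noting that (\ref{eq26}) forces $q_i=p_i\gamma_i=v_i$, so that (\ref{eq35}) holds, with the identification of the specialized objects (including $\mscr U/\mscr J$, the role of $\msf c_{i,\lambda}^2=1$, and the matching with the algebra $U_{\theta,\mbf p}(\mrk g)$ of \cite{KKO13}) recorded in the specialization table, and with part~(b) obtained exactly as you describe, via Proposition~\ref{prop4} and \cite[23.1.4]{Lusztig93}. Your extra care about base change and the relation-by-relation comparison with \cite{KKO13} only makes explicit what the paper asserts through its table.
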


Note that Corollary \ref{cor3} implies that $\mbf C\simeq \msf C_{\theta,\mbf p}$, which has been proved in \cite{KKO13} for generic parameters.

\subsection{Multi-parameter supercase, II}

Consider the families $\tilde{\theta}=\{\tilde{\theta}_{ij}\}_{ i, j\in I}$ and $\tilde{p}=\{\tilde{p}_{i}\}_{i\in I}$ of parameters satisfying that
\begin{equation*}\label{eq27}
 \tilde{\theta}_{ij} \tilde{\theta}_{ji}= \tilde{p}_{i}^{-a_{ij}} \quad {\rm and}\quad
 \tilde{\theta}_{ii}= \tilde{p}_{i}^{-1},\quad \forall i, j\in I,
\end{equation*}
We further assume that
\begin{equation}\label{eq28}
\tilde{p}_i=v_i^2, \quad \forall i\in I.
\end{equation}
Let  $\zeta_{ij}=\tilde{\theta}_{ij}v_i^{a_{ij}}$.
In this subsection, the families $\mbf{q, s}$ and $\mbf t$ of parameters are specialized as follows.
\begin{equation}\label{eq41}
  q_i=\tilde p_{i}^{1/2},
  \quad s_{ij}=\left\{\begin{array}{ll}
   \zeta_{ji}v_{i}^{-a_{ij}} & {\rm if}\ i> j,\vspace{3pt}\\
   v_{i}^{-a_{ij}} & {\rm if}\ i\leq j,
 \end{array}\right.
 \quad
{\rm and }\quad t_{ij}=\left\{\begin{array}{ll}
   \zeta_{ij}& {\rm if}\ i\geq j,\vspace{3pt}\\
   1 & {\rm if}\ i<j,
 \end{array}\right.
 \quad \forall i, j\in I.
\end{equation}
The specialization of $\dot{\mscr{U}}$ (resp. $\mscr C$, $\mbb K$, $\mscr A$ and $\msf c_{i,\lambda}$) at (\ref{eq41}) is as follows.
\begin{center}
\renewcommand\arraystretch{1.6}
\begin{tabular}{|p{20pt}|p{25pt}|p{20pt}|p{105pt}|p{110pt}|p{20pt}|}
 \hline
 $\dot{\mscr{U}}$ & ${}_{\mscr A}\!\dot{\mscr{U}}$ &$\mscr C$& $\mbb K$ & $\mscr A$  &  $\msf c_{i,\lambda}$ \\
\hline
 $\dot{\msf{U}}_{\tilde \theta,\tilde p}$ &${}_{\widetilde{\msf A}}\!\dot{\msf{U}}_{\tilde \theta,\tilde p}$ &$\msf C_{\tilde \theta,\tilde p}$ &
 $\widetilde{\mbb T}=\mbb{Q}(\tilde \theta_{ij},\tilde p_{i}^{1/(2d_i)})$
 & $\widetilde{\msf A}=\mbb{Z}[\tilde \theta_{ij}^{\pm 1}, \tilde p_{i}^{\pm 1/(2d_i)} ]$ & $v_i^{\lambda_i}$\\
\hline
\end{tabular}
\end{center}
 The quantum algebra $\msf U_{\tilde \theta, \tilde p}$ is the algebra   $U_{\tilde \theta, \tilde p}(\mrk g)$ introduced in  \cite{KKO13}.
From Theorem \ref{thm8}, we have
\begin{cor}\label{cor4}
Under the assumption (\ref{eq28}), there is an isomorphism
$\widetilde{\msf A}\otimes_{\mbf A} {}_{\mbf A}\!\dot{\U}\simeq {}_{\widetilde{\msf A}}\!\dot{\msf{U}}_{\tilde \theta,\tilde p}$, as  algebras.
\end{cor}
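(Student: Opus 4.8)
The plan is to deduce the statement directly from Theorem~\ref{thm8}(b) by checking that the specialization (\ref{eq41}) meets its hypothesis and identifies the relevant algebras with the Kang--Kashiwara--Oh objects. First I would verify that the hypothesis (\ref{eq35}) holds after specialization: by (\ref{eq41}) we have $q_i=\tilde p_i^{1/2}$, and the extra assumption (\ref{eq28}) gives $\tilde p_i=v_i^2$, so $q_i=v_i$ for all $i\in I$. Consequently the compatibility $q_i^{a_{ij}}=q_j^{a_{ji}}$ used throughout Section~\ref{newU} holds automatically, since under $q_i=v_i$ both sides equal $v^{\,i\cdot j}$ by (\ref{eq23}) and $d_ia_{ij}=i\cdot j$.

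Next I would check that (\ref{eq41}) defines a ring homomorphism $\mscr A\to\widetilde{\msf A}$. Each of $s_{ij}$, $t_{ij}$ and $q_i^{1/d_i}$ must be sent to a unit of $\widetilde{\msf A}=\mbb Z[\tilde\theta_{ij}^{\pm1},\tilde p_i^{\pm1/(2d_i)}]$: indeed $q_i^{1/d_i}=v=\tilde p_i^{1/(2d_i)}$, while $s_{ij}$ and $t_{ij}$, being products of $\zeta_{ij}=\tilde\theta_{ij}v_i^{a_{ij}}$ and powers of $v_i=\tilde p_i^{1/2}$, are Laurent monomials in the generators of $\widetilde{\msf A}$. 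A short computation then confirms the table entry $\msf c_{i,\lambda}=v_i^{\lambda_i}$: using $\tilde\theta_{ij}\tilde\theta_{ji}=\tilde p_i^{-a_{ij}}$ and $\tilde\theta_{ii}=\tilde p_i^{-1}$ one finds $s_{ij}t_{ij}=v_i^{-a_{ij}}$ in each of the three cases $i<j$, $i=j$, $i>j$, whence $\msf c_{i,\lambda}=\prod_{j}(s_{ij}t_{ij})^{-\lambda(j)}=v_i^{\sum_j a_{ij}\lambda(j)}=v_i^{\lambda_i}$.

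The substantive step is to verify that $\mscr U$ specialized at (\ref{eq41}) is exactly the superalgebra $\msf U_{\tilde\theta,\tilde p}=U_{\tilde\theta,\tilde p}(\mrk g)$ of \cite{KKO13}, and hence that $\dot{\mscr U}$ and ${}_{\mscr A}\!\dot{\mscr U}$ become $\dot{\msf U}_{\tilde\theta,\tilde p}$ and ${}_{\widetilde{\msf A}}\!\dot{\msf U}_{\tilde\theta,\tilde p}$ respectively. Here I would specialize the defining relations $(\mscr Ua)$--$(\mscr Ud)$ and match them, term by term, with those in \cite{KKO13}: the $K_i$-conjugation factors in $(\mscr Ub)$, the super-commutation factor $s_{ij}t_{ji}$ in $(\mscr Uc)$, and the twisting coefficients $s_{ji}^ls_{ij}^{-l}$ and $t_{ji}^lt_{ij}^{-l}$ in the Serre relations $(\mscr Ud)$. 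I expect this to be the main obstacle, because the piecewise definition of $s_{ij}$ and $t_{ij}$ in (\ref{eq41}) depends on the chosen total order ``$<$'' on $I$, and it is precisely this order that must reproduce the correct super-signs of \cite{KKO13}; tracking the two cases $i<j$ and $i>j$ in each relation is where the care lies.

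Finally, having identified all the objects, I would invoke Theorem~\ref{thm8}(b), which gives ${\mscr A}\otimes_{\mbf A}{}_{\mbf A}\!\dot\U\simeq{}_{\mscr A}\!\dot{\mscr U}$ under the hypothesis (\ref{eq35}) verified above. Base-changing this $\mscr A$-algebra isomorphism along the ring homomorphism $\mscr A\to\widetilde{\msf A}$ of (\ref{eq41})---the twisting scalars $\msf e_{i,\lambda}$ and $\msf f_{i,\lambda}$ defining $\psi$ are Laurent monomials in the $s_{ij},t_{ij}$ and so specialize into $\widetilde{\msf A}$---yields the desired isomorphism $\widetilde{\msf A}\otimes_{\mbf A}{}_{\mbf A}\!\dot\U\simeq{}_{\widetilde{\msf A}}\!\dot{\msf U}_{\tilde\theta,\tilde p}$.
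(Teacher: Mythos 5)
Your proposal is correct and follows essentially the same route as the paper: the paper's proof is precisely ``apply Theorem~\ref{thm8}(b) to the specialization (\ref{eq41}), noting that (\ref{eq28}) gives $q_i=\tilde p_i^{1/2}=v_i$, i.e.\ hypothesis (\ref{eq35}),'' with the identifications of $\mbb K$, $\mscr A$ and $\msf c_{i,\lambda}$ recorded in the table rather than computed. Your additional verifications (that (\ref{eq41}) lands in units of $\widetilde{\msf A}$, that $s_{ij}t_{ij}=v_i^{-a_{ij}}$ in all three cases so $\msf c_{i,\lambda}=v_i^{\lambda_i}$, and the matching with the relations of \cite{KKO13}) are exactly the details the paper leaves implicit, so there is no substantive difference in approach.
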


\end{document}